\theoremstyle{plain}
\newtheorem{theorem}{Theorem}
\newtheorem{lemma}[theorem]{Lemma}
\newtheorem{corollary}{Corollary}[theorem]
\newtheorem{example}{Example}
\theoremstyle{definition}
\theoremstyle{remark}
\theoremstyle{plain}
\newtheorem*{theorem*}{Theorem}
\newtheorem*{lemma*}{Lemma}
\newtheorem*{proposition*}{Proposition}
\newtheorem*{statement*}{Statement}
\newtheorem*{corollary*}{Corollary}
\newtheorem*{example*}{Example}
\theoremstyle{definition}
\newtheorem*{definition*}{Definition}
\theoremstyle{remark}
\newtheorem*{notation*}{Notation}
\newtheorem*{remark*}{Remark}
\renewcommand{\theexample}%
{\Alph{example}}
\begin{document}
\footnotetext[1]{The investigation is partially supported by DFFD of Ukraine under grant $\Phi28.1/017$.}

\title[Hill's potentials in H\"{o}rmander spaces]
{Hill's potentials in H\"{o}rmander spaces and their spectral gaps$^{1}$}

\address{Institute of Mathematics of NAS of Ukraine \\
         Tereshchenkivska str., 3 \\
         Kyiv-4 \\
         Ukraine \\
         01601}

\author[V.Mikhailets, V. Molyboga] {Vladimir Mikhailets, Volodymyr Molyboga}

\email[Vladimir Mikhailets]{mikhailets@imath.kiev.ua}

\email[Volodymyr Molyboga]{molyboga@imath.kiev.ua}

\keywords{Hill-Schr\"{o}dinger operators, singular potentials, spectral gaps, H\"{o}rmander spaces}

\subjclass[2000]{Primary 34L40; Secondary 47A10, 47A75}









\begin{abstract}
We study the behaviour of the lengths of spectral gaps $\{\gamma_{q}(n)\}_{n\in \mathbb{N}}$ in a continuous spectrum of the Hill-Schr\"{o}dinger
operators
\begin{equation*}
  S(q)u=-u''+q(x)u,\quad x\in \mathbb{R},
\end{equation*}
with 1-periodic real-valued distribution potentials
\begin{equation*}
  q(x)=\sum_{k\in \mathbb{Z}}\widehat{q}(k)\,e^{i k 2\pi x}\in H^{-1}(\mathbb{T}),\quad
  \widehat{q}(k)=\overline{\widehat{q}(-k)},\; k\in \mathbb{Z},
\end{equation*}
in dependence on the weight $\omega$ of the H\"{o}rmander space $H^{\omega}(\mathbb{T})\ni q$.

Let $h^{\omega}(\mathbb{N})$ be a Hilbert space of weighted sequences. We prove that
\begin{equation*}
\{\widehat{q}(\cdot)\}\in h^{\omega}(\mathbb{N})\Leftrightarrow\{\gamma_{q}(\cdot)\}\in h^{\omega}(\mathbb{N})
\end{equation*}
if a positive, in general non-monotonic, weight $\omega=\{\omega(k)\}_{k\in \mathbb{N}}$ satisfies only two following conditions:
\begin{align*}
  \verb"i")\hspace{5pt} & -1<\mu(\omega)=\liminf_{k\rightarrow\infty}\frac{\log\omega(k)}{\log k}\leq
  \rho(\omega)=\limsup_{k\rightarrow\infty}\frac{\log\omega(k)}{\log k}<\infty,\hspace{150pt} \\
 \verb" ii")\hspace{5pt} & \rho(\omega)<
  \begin{cases}
    1+2\mu(\omega) & \text{if}\hspace{15pt}\mu(\omega)\in(-1,0], \\
    1+\mu(\omega) & \text{if}\hspace{15pt}\mu(\omega)\in[0,\infty).
  \end{cases}
\end{align*}

In the case $q\in L^{2}(\mathbb{T})$, $\omega(k)=(1+2k)^{s}$, $s\in \mathbb{Z}_{+}$, this result is due to Marchenko and Os\-trov\-s\-kii~(1975).
\end{abstract}

\maketitle

\section{Introduction}\label{sct_Int}
Let consider on the complex Hilbert space $L^{2}(\mathbb{R})$ the Hill-Schr\"{o}dinger operators
\begin{equation}\label{eq_10}
  S(q)u:=-u''+q(x)u,\quad x\in \mathbb{R},
\end{equation}
with 1-periodic real-valued potentials
\begin{equation*}
  q(x)=\sum_{k\in \mathbb{Z}}\widehat{q}(k)e^{i k 2\pi x}\in L^{2}(\mathbb{T},\mathbb{R}),\quad
  \mathbb{T}:=\mathbb{R}/\mathbb{Z}.
\end{equation*}
This means that
\begin{equation*}\label{eq_11}
 \sum_{k\in \mathbb{Z}}|\widehat{q}(k)|^{2}<\infty
 \quad\text{and}\quad \widehat{q}(k)=\overline{\widehat{q}(-k)},\quad k\in \mathbb{Z}.
\end{equation*}

It is well known that the operators $S(q)$ are lower semibounded and self-adjoint. Their spectra are absolutely continuous and have a zone
structure \cite{ReSi4}.

Spectra of the operators $S(q)$, $q\in L^{2}(\mathbb{T},\mathbb{R})$, are completely defined by the location of the endpoints of spectral gaps
$\{\lambda_{0}(q),\lambda_{n}^{\pm}(q)\}_{n=1}^{\infty}$, which satisfy the inequalities:
\begin{equation}\label{InEq}
  -\infty<\lambda_{0}(q)<\lambda_{1}^{-}(q)\leq\lambda_{1}^{+}(q)<\lambda_{2}^{-}(q)\leq\lambda_{2}^{+}(q)<\cdots\,.
\end{equation}
For even/odd numbers $n\in \mathbb{Z}_{+}$ the endpoints of spectral gaps $\{\lambda_{0}(q),\lambda_{n}^{\pm}(q)\}_{n=1}^{\infty}$ are eigenvalues
of the periodic/semiperiodic problems on the interval $[0,1]$:
\begin{align*}
   S_{\pm}(q)u & :=-u''+q(x)u=\lambda u, \\
   \mathrm{Dom}(S_{\pm}(q)) & :=\left\{u\in H^{2}[0,1]\left|\, u^{(j)}(0)=\pm\, u^{(j)}(1),\, j=0,1\right.\right\}.\hspace{130pt}
\end{align*}

Interiors of spectral bands (stability or tied zones)
\begin{equation*}
  \mathcal{B}_{0}(q):=(\lambda_{0}(q),\lambda_{1}^{-}(q)),\qquad
  \mathcal{B}_{n}(q):=(\lambda_{n}^{+}(q),\lambda_{n+1}^{-}(q)),\quad n\in
  \mathbb{N},
\end{equation*}
together with the \textit{collapsed} gaps
\begin{equation*}
 \lambda=\lambda_{n}^{+}=\lambda_{n}^{-}, \quad n\in \mathbb{N}
\end{equation*}
are characterized as a locus of those real $\lambda\in \mathbb{R}$ for which all solutions of the equation $S(q) u=\lambda u$ are bounded.
\textit{Open} spectral gaps (instability or forbidden zones)
\begin{equation*}
  \mathcal{G}_{0}(q):=(-\infty,\lambda_{0}(q)),\qquad
  \mathcal{G}_{n}(q):=(\lambda_{n}^{-}(q),\lambda_{n}^{+}(q))\neq\emptyset,\quad n\in
  \mathbb{N}
\end{equation*}
are a locus of those real $\lambda\in \mathbb{R}$ for which any nontrivial solution of the equation $S(q) u=\lambda u$ is unbounded.

We are going to characterize the behaviour of the lengths of spectral gaps
\begin{equation*}
  \gamma_{q}(n):=\lambda_{n}^{+}(q)-\lambda_{n}^{-}(q),\quad n\in \mathbb{N}
\end{equation*}
of the Hill-Schr\"{o}dinger operators $S(q)$ in terms of the behaviour of the Fourier coefficients $\{\widehat{q}(n)\}_{n\in \mathbb{N}}$ of the
potentials $q$ with respect to appropriate weight spaces, that is by means of potential regularity.

For $L^{2}(\mathbb{T},\mathbb{R})$-potentials fundamental result in this problem follows from the Marchenko and Ostrovskii paper~\cite{MrOs} (see
also \cite{Mrch}):
\begin{equation}\label{eq_14}
  q\in H^{s}(\mathbb{T},\mathbb{R})\Leftrightarrow\sum_{n\in \mathbb{N}}(1+2n)^{2s}\gamma_{q}^{2}(n),\qquad s\in
  \mathbb{Z}_{+},
\end{equation}
where $H^{s}(\mathbb{T},\mathbb{R})$, $s\in \mathbb{Z}_{+}$, denotes the Sobolev spaces of 1-periodic real-valued functions on the circle
$\mathbb{T}$.

To characterize regularity of potentials in the finer way we will use the real H\"{o}rmander spaces
\begin{equation*}
  H^{\omega}(\mathbb{T},\mathbb{R}):=\left\{f=\sum_{k\in \mathbb{Z}}\widehat{f}\,(k)e^{i k2\pi
  x}\left|\;\widehat{f}(k)=\overline{\widehat{f}(-k)},\;k\in \mathbb{Z},\;
  \sum_{k\in \mathbb{N}}  \omega^{2}(k)|\widehat{f}(k)|^{2}<\infty\right.\right\},
\end{equation*}
where $\omega(\cdot)$ is a positive weight. In the case of the Sobolev spaces it is a power one. Such definition of the real H\"{o}rmander spaces
on the circle completely corresponds to the theory of function spaces on a smooth closed manifold~\cite{MiMr1, MiMr2}.

Djakov, Mityagin \cite{DjMt2}, P\"{o}schel \cite{Psch1, Psch2} extended the Marchenko-Ostrovskii Theorem \eqref{eq_14} to the general class of
weights $\Omega=\{\Omega(k)\}_{k\in \mathbb{N}}$ satisfying the following conditions:
\begin{align*}
 \verb"i")\hspace{5pt} & \Omega(k)\nearrow\infty,\; k\in \mathbb{N};\hspace{5pt}\text{(monotonicity)}  \hspace{220pt} \\
\verb"ii")\hspace{5pt} & \Omega(k+m)\leq \Omega(k)\Omega(m)\quad \forall k,m\in \mathbb{N};\hspace{5pt}\text{(submultiplicity)} \\
\verb"iii")\hspace{5pt} & \frac{\log\Omega(k)}{k}\searrow 0,\quad k\rightarrow\infty,\hspace{5pt}\text{(subexponentiality)}.
\end{align*}
For such weights they proved that
\begin{equation}\label{eq_16}
  q\in H^{\Omega}(\mathbb{T},\mathbb{R})\Leftrightarrow \{\gamma_{q}(\cdot)\}\in  h^{\Omega}(\mathbb{N}).
\end{equation}
Here $h^{\Omega}(\mathbb{N})$ is the Hilbert space of weighted sequences generated by the weight $\Omega(\cdot)$.

Earlier Kappeler, Mityagin \cite{KpMt2} proved the direct implication in \eqref{eq_16} under the only assumption of submultiplicity. In the
special cases of the Abel-Sobolev weights, the Gevrey weights and the slowly increasing weights the relationship \eqref{eq_16} was established by
Kappeler, Mityagin \cite{KpMt1} ($\Rightarrow$) and Djakov, Mityagin \cite{DjMt, DjMt1} ($\Leftarrow$). Detailed exposition of these results is
given in the survey \cite{DjMt2}. It should be noted that P\"{o}schel \cite{Psch1, Psch2}, Kappeler, Mityagin \cite{KpMt1, KpMt2} and Djakov,
Mityagin \cite{DjMt1, DjMt2} studied also the more general case of complex-valued potentials.

For the power weights
\begin{equation*}
 w_{s}=\left\{w_{s}(k)\right\}_{k\in \mathbb{N}}:\qquad w_{s}(k):=(1+2k)^{s},\quad s\in \mathbb{R}
\end{equation*}
it is convenient to use shorter notation
\begin{equation*}
  H^{\omega_{s}}(\mathbb{T})\equiv H^{s}(\mathbb{T}),\quad h^{\omega_{s}}(\mathbb{N})\equiv h^{s}(\mathbb{N}).
\end{equation*}

After the celebrated Kronig and Penney paper \cite{KrPn} the Schr\"{o}dinger operators with (periodic) distributions as potentials came into
mathematical physics. The following development of quantum mechanics stimulated active growth of this branch of science (see the bibliography of
the monographs \cite{AlGHH, AlKr}, which counts several hundreds of physical and mathematical works).

In this paper we study the Hill-Schr\"{o}dinger operators $S(q)$ with 1-periodic real-valued distribution potentials $q$ from the negative Sobolev
space $H^{-1}(\mathbb{T},\mathbb{R})$:
\begin{equation}\label{eq_MCP}
  q(x)=\sum_{k\in \mathbb{Z}}\widehat{q}(k)e^{i k 2\pi x}\in H^{-1}(\mathbb{T},\mathbb{R}).
\end{equation}
This means that
\begin{equation*}
    \sum_{k\in \mathbb{N}}(1+2k)^{2s}|\widehat{q}(k)|^{2}<\infty,\quad s=-1,
    \quad\text{and}\quad  \widehat{q}(k)=\overline{\widehat{q}(-k)},
    \quad k\in \mathbb{Z}.   \eqno(6_{s})
\end{equation*}
All pseudo-functions, measures, pseudo-measures and some even more singular distributions on the circle satisfy this condition.

Under the assumption \eqref{eq_MCP} the operators \eqref{eq_10} can be well defined on the complex Hilbert space $L^{2}(\mathbb{R})$ in the
following basic ways:
\begin{itemize}
  \item as form-sum operators;
  \item as quasi-differential operators (minimal operators, maximal operators, the Friedrichs extensions of the minimal
  operators);
  \item as limits of operators with smooth 1-periodic potentials in the norm resolvent sense.
\end{itemize}
Equivalence of all these definitions was proved in the paper \cite{MiMl6}.
\setcounter{equation}{6}

The Hill-Schr\"{o}dinger operators $S(q)$, $q\in H^{-1}(\mathbb{T},\mathbb{R})$, are lower semibounded and self-adjoint, their spectra are
absolutely continuous and have a band and gap structure as in the classical case of $L^{2}(\mathbb{T},\mathbb{R})$-potentials, \cite{HrMk, Krt,
DjMt3, MiMl6, MiSb, GsKr}. The endpoints of spectral gaps $\{\lambda_{0}(q),\lambda_{n}^{\pm}(q)\}_{n=1}^{\infty}$ satisfy the inequalities
\eqref{InEq}. For even/odd numbers $n\in \mathbb{Z}_{+}$ they are eigenvalues of the periodic/semiperiodic problems on the interval $[0,1]$
\cite[Theorem~C]{MiMl6}.

In the paper \cite{MiMl7} we extended the Marchenko-Ostrovskii Theorem \eqref{eq_14} to the case of singular potentials $q\in
H^{-1+}(\mathbb{T},\mathbb{R})$. This means that $q$ satisfies $(6_{s})$ with some $s>-1$. We proved that
\begin{equation}\label{eq_18}
  q\in H^{s}(\mathbb{T},\mathbb{R})\Leftrightarrow \{\gamma_{q}(\cdot)\}\in h^{s}(\mathbb{N}),\qquad s\in (-1,\infty).
\end{equation}
The case $s\in (-1,0]$ was earlier treated in \cite{MiMl3, Mlb2}.

Djakov, Mityagin \cite{DjMt4, DjMt5} extended the latter statement to the limiting case $s=-1$:
\begin{equation}\label{eq_19.1}
  q\in H^{s}(\mathbb{T},\mathbb{R})\Leftrightarrow \{\gamma_{q}(\cdot)\}\in h^{s}(\mathbb{N}),\qquad s\in [-1,\infty),
\end{equation}
under the \textit{a priori} assumption $q\in H^{-1}(\mathbb{T},\mathbb{R})$. Moreover, they extended the result \eqref{eq_16} to the case of
potentials $q\in H^{-1}(\mathbb{T},\mathbb{R})$ and more general weights:
\begin{equation}\label{eq_19.2}
  q\in H^{\Omega^{\ast}}(\mathbb{T},\mathbb{R})\Leftrightarrow \{\gamma_{q}(\cdot)\}\in  h^{\Omega^{\ast}}(\mathbb{N}),
  \qquad \Omega^{*}:=\left\{\frac{\Omega(k)}{1+2k}\right\}_{k\in \mathbb{N}},
\end{equation}
where the weights $\Omega=\{\Omega(k)\}_{k\in \mathbb{N}}$ are supposed to be monotonic, submultiplicative and subexponential.
\section{Main results}\label{sct_MnRs}
The aim of this paper is to extend the result \eqref{eq_19.1} to a more extensive class of weights, for which the conditions of regularity of the
weight behaviour may not hold, and to supplement the result \eqref{eq_19.2}.

For convenience of formulation of the results we introduce the following definition.
\begin{definition*}\label{df_10}
Let the set $X\subset H^{-1}(\mathbb{T},\mathbb{R})$. We write $\omega\in \mathrm{MO}(X)$ if
\begin{equation*}
  q\in H^{\omega}(\mathbb{T},\mathbb{R})\Leftrightarrow \{\gamma_{q}(\cdot)\}\in
  h^{\omega}(\mathbb{N})\quad \forall q\in X.
\end{equation*}
\end{definition*}
\noindent It is easy to see that
\begin{equation*}
 X \supset Y  \Rightarrow \mathrm{MO}(X) \subset \mathrm{MO}(Y),\hspace{50pt}
\end{equation*}
and
\begin{align*}
  \eqref{eq_14} & \Leftrightarrow \omega_{s} \in \mathrm{MO}(L^{2}(\mathbb{T})),\quad s\in \mathbb{Z}_{+}, \\
  \eqref{eq_16} & \Leftrightarrow \Omega\in \mathrm{MO}(L^{2}(\mathbb{T})), \\
  \eqref{eq_19.1} & \Leftrightarrow \omega_{s}\in \mathrm{MO}(H^{-1}(\mathbb{T})),\quad  s\in  [-1,\infty), \\
  \eqref{eq_19.2} & \Leftrightarrow \Omega^{\ast}\in \mathrm{MO}(H^{-1}(\mathbb{T})).
\end{align*}

Further, let us recall that \textit{lower order} $\mu(\omega)$ and \textit{upper order} $\rho(\omega)$ of a weight sequence
$\omega=\{\omega(k)\}_{k\in\mathbb{N}}$ are defined as follows:
\begin{equation*}
  \mu\equiv\mu(\omega):=\liminf_{k\rightarrow\infty}\frac{\log\omega(k)}{\log k},\qquad
  \rho\equiv\rho(\omega):=\limsup_{k\rightarrow\infty}\frac{\log\omega(k)}{\log k}.
\end{equation*}

In what follows we will consider only weights satisfying the conditions:
\begin{equation*}
  -1<\mu(\omega)\leq\rho(\omega)<\infty.
\end{equation*}
Such weights have not more than a power growth. The following statement is the main result of this paper.
\begin{theorem}\label{th_10}
Let $q\in H^{-1}(\mathbb{T},\mathbb{R})$ and the weight $\omega=\{\omega(k)\}_{k\in\mathbb{N}}$ satisfy conditions:
\begin{align}
   & -1<\mu(\omega)\leq\rho(\omega)<\infty, \label{eq_20} \\
   & \rho(\omega)<
   \begin{cases}
    1+2\mu(\omega) & \text{if}\hspace{15pt}\mu(\omega)\in(-1,0],  \\
    1+\mu(\omega) & \text{if}\hspace{15pt}\mu(\omega)\in[0,\infty).
  \end{cases}
  \label{eq_22}
\end{align}
Then $\omega\in\mathrm{MO}(H^{-1}(\mathbb{T}))$.
\end{theorem}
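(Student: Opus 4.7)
The overall strategy I would take is to reduce the theorem to the power-weight case \eqref{eq_19.1} by means of pointwise quantitative comparisons between $\gamma_q(n)$ and $\hat{q}(n)$. Condition \eqref{eq_20} immediately supplies polynomial two-sided envelopes on $\omega$: for every $\varepsilon>0$ there exists $N_\varepsilon$ with $k^{\mu-\varepsilon}\leq\omega(k)\leq k^{\rho+\varepsilon}$ for all $k\geq N_\varepsilon$. In particular $H^{\omega}(\mathbb{T})\subset H^{\mu-\varepsilon}(\mathbb{T})$ with $\mu-\varepsilon>-1$ for small $\varepsilon$, so any $q\in H^{\omega}(\mathbb{T},\mathbb{R})$ falls within the scope of the spectral machinery developed for $H^{-1}(\mathbb{T},\mathbb{R})$-potentials in \cite{MiMl6, MiMl7}.

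The core analytic step would be to extract from \cite{MiMl7} (together with the two-term perturbative analysis of \cite{MrOs, Psch1, DjMt5}) a two-sided pointwise estimate of the schematic form
\[\bigl|\,\gamma_q(n)^2-4|\hat{q}(n)|^2\,\bigr|\leq R(q,n),\qquad n\in\mathbb{N},\]
in which the remainder $R(q,n)$ is expressible as a convolution-type sum over the Fourier coefficients of $q$ with a concrete decay in $n$ determined by the singularity level of $q$. The crucial feature is that the decay exponent of $R(q,n)$ bifurcates at $\mu=0$, matching the two cases of hypothesis \eqref{eq_22}: for $q\in H^{-1}(\mathbb{T})\setminus L^2(\mathbb{T})$ the relevant rate is $n^{-(1+2\mu)}$, while for smoother $q$ it improves to $n^{-(1+\mu)}$. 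This is the step where I expect the bulk of the technical work to live, and where condition \eqref{eq_22} first gets consumed.

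Granted such an estimate, the forward implication is proved by writing
\[\sum_{n\in\mathbb{N}}\omega(n)^2\gamma_q(n)^2\leq 4\sum_{n\in\mathbb{N}}\omega(n)^2|\hat{q}(n)|^2+\sum_{n\in\mathbb{N}}\omega(n)^2R(q,n).\]
The first sum is finite because $q\in H^{\omega}$; in the second, substituting the upper envelope $\omega(n)\lesssim n^{\rho+\varepsilon}$ reduces the problem to the convergence of a polynomial series whose exponent is precisely controlled by \eqref{eq_22} after choosing $\varepsilon$ small. The reverse implication is handled symmetrically: invert the pointwise estimate to bound $|\hat{q}(n)|^2\leq C\gamma_q(n)^2+R(q,n)$ and repeat the same summation against $\omega(n)^2$.

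The main obstacle I anticipate is the non-monotonicity of $\omega$. If $\omega$ were monotonic and submultiplicative one could appeal to \eqref{eq_19.2} directly; here the weight is controlled only through its lower and upper orders, and a naive passage to a monotonic majorant can only worsen $\rho$, potentially destroying \eqref{eq_22}. One is therefore forced to work term-by-term using the two-sided polynomial envelopes at each frequency $n$, while the strict inequality in \eqref{eq_22} provides exactly the $\varepsilon$-margin required to absorb $R(q,n)$. Verifying that the bifurcated decay rate of $R(q,n)$ is sharp enough for precisely the hypothesis \eqref{eq_22} (and not a more restrictive one) is the most delicate bookkeeping of the whole argument.
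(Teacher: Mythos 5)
Your overall architecture matches the paper's: sandwich $\omega$ between the power weights $k^{\mu-\delta}$ and $k^{\rho+\delta}$, establish a two-term asymptotic for $\gamma_q(n)$ whose remainder exponent bifurcates at $\mu=0$ into $1+2\mu$ and $1+\mu$ (the paper's Lemma \ref{lm_12}, taken from \cite{MiMl7} for $s\in(-1,0]$ and derived from \cite{KpMt2} plus the Convolution Lemma for $s\geq0$), and use the strict inequality in \eqref{eq_22} as the margin that absorbs the remainder into $h^{\omega}(\mathbb{N})$. However, the formulation of your key estimate has a genuine quantitative defect. You posit a \emph{pointwise} bound $\bigl|\gamma_q(n)^2-4|\hat q(n)|^2\bigr|\leq R(q,n)$ with $R(q,n)$ decaying like $n^{-(1+2\mu)}$ and then sum $\omega(n)^2R(q,n)$ against the envelope $\omega(n)\lesssim n^{\rho+\varepsilon}$. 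That series converges only when $2(\rho+\varepsilon)-(1+2\mu)<-1$, i.e.\ $\rho<\mu-\varepsilon$, which is never satisfied since $\rho\geq\mu$; a pointwise decay rate loses the extra $1/2$ per exponent needed for summability of $\sum n^{-1-\epsilon}$, and squaring the gap lengths worsens the bookkeeping further (bounding $\sum\omega(n)^2R(q,n)$ by Cauchy--Schwarz would require the very conclusion $\{\gamma_q\}\in h^{\omega}$ you are proving). The estimate must instead be stated as membership of the \emph{sequence} $\{\gamma_q(n)-2|\hat q(n)|\}_n$ in $h^{1+2s-\varepsilon}(\mathbb{N})$ resp.\ $h^{1+s}(\mathbb{N})$ (a weighted $\ell^2$ statement); then $\sum_n\omega(n)^2|\gamma_q(n)-2|\hat q(n)||^2\leq\sup_n\bigl(\omega(n)^2n^{-2t}\bigr)\cdot\sum_n n^{2t}|\gamma_q(n)-2|\hat q(n)||^2$ is finite precisely when $\rho+\delta\leq t$, which is what \eqref{eq_22} guarantees.

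The second gap is in your reverse implication, which you treat as "symmetric." The remainder estimate is only available once you know $q\in H^{\mu-\delta}(\mathbb{T},\mathbb{R})$ with $\mu-\delta>-1$; in the forward direction this comes for free from $q\in H^{\omega}\hookrightarrow H^{\mu-\delta}$, but in the reverse direction you only know $q\in H^{-1}$ and $\{\gamma_q(\cdot)\}\in h^{\omega}(\mathbb{N})$ a priori, so "inverting the pointwise estimate" is circular: $R(q,n)$ depends on the regularity of $q$ that you are trying to establish. The paper closes this loop by first invoking the already-known power-weight equivalence \eqref{eq_19.1}: from $\{\gamma_q(\cdot)\}\in h^{\omega}(\mathbb{N})\hookrightarrow h^{\mu-\delta}(\mathbb{N})$ it concludes $q\in H^{\mu-\delta}(\mathbb{T},\mathbb{R})$, and only then applies the asymptotic formula \eqref{eq_54} to upgrade $\{\hat q(\cdot)\}$ to $h^{\omega}(\mathbb{N})$. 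Without this bootstrap (or an equivalent self-improvement argument) the backward direction does not close. Your remark that one cannot pass to a monotonic majorant without destroying \eqref{eq_22} is correct and is exactly why the paper works only with the two power-weight envelopes and the embedding \eqref{eq_44}.
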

\begin{corollary}\label{cr_10}
Let for the weight $\omega=\{\omega(k)\}_{k\in\mathbb{N}}$ exist the order
\begin{equation*}\label{eq_24}
  \lim_{k\rightarrow\infty}\frac{\log\omega(k)}{\log k}=s\in(-1,\infty).
\end{equation*}
Then $\omega\in\mathrm{MO}(H^{-1}(\mathbb{T}))$.
\end{corollary}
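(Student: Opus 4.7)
The plan is to deduce Corollary \ref{cr_10} directly from Theorem \ref{th_10} by verifying that any weight $\omega$ for which the order $s := \lim_{k\to\infty}\log\omega(k)/\log k$ exists in $(-1,\infty)$ automatically satisfies both hypotheses \eqref{eq_20} and \eqref{eq_22}. The existence of the ordinary limit forces the $\liminf$ and $\limsup$ to coincide, so $\mu(\omega) = \rho(\omega) = s$; hence condition \eqref{eq_20} reduces to the assumption $s \in (-1,\infty)$ and needs no further argument.

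The only remaining point is to check that condition \eqref{eq_22} is not a vacuous demand in the equal-orders case. I would split on the sign of $s$ exactly as in the theorem. For $s \in (-1,0]$ the required inequality $\rho(\omega) < 1 + 2\mu(\omega)$ becomes $s < 1 + 2s$, equivalently $-1 < s$, which is given. For $s \in [0,\infty)$ the required inequality $\rho(\omega) < 1 + \mu(\omega)$ becomes $s < 1 + s$, which is trivially true. (At the overlap point $s=0$ both inequalities reduce to $0<1$.) Thus \eqref{eq_22} is automatic.

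Having verified both hypotheses, I would invoke Theorem \ref{th_10} to conclude that $\omega \in \mathrm{MO}(H^{-1}(\mathbb{T}))$, which is the assertion of the corollary. There is no substantive obstacle in this argument; the only thing to be careful about is that the strict inequalities in \eqref{eq_22} remain strict once $\mu$ is replaced by $\rho = s$, and indeed they do precisely because $s > -1$ in the first branch and because $0 < 1$ in the second. This explains why the regime $s = -1$ (where $\mu = \rho = -1$ would force $s < 1 + 2s = -1$, a contradiction) is correctly excluded by the hypothesis, consistent with the fact that the limiting case $s = -1$ of \eqref{eq_19.1} required the separate and deeper treatment of Djakov--Mityagin.
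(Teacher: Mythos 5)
Your proposal is correct and matches the intended argument: the paper treats Corollary \ref{cr_10} as an immediate consequence of Theorem \ref{th_10}, since the existence of the limit gives $\mu(\omega)=\rho(\omega)=s$, making \eqref{eq_20} the hypothesis $s\in(-1,\infty)$ and reducing \eqref{eq_22} to $s<1+2s$ (i.e.\ $s>-1$) or $s<1+s$, both automatic. Your additional remark on why $s=-1$ is excluded is consistent with the paper's discussion.
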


From Corollary \ref{cr_10} we receive the following result.
\begin{corollary}[cf. \cite{MiMl7}]\label{cr_12}
Let the weight $\omega=\{\omega(k)\}_{k\in\mathbb{N}}$ be a regular varying sequence on $+\infty$ in the Karamata sense with the index
$s\in(-1,\infty)$. Then $\omega\in\mathrm{MO}(H^{-1}(\mathbb{T}))$.
\end{corollary}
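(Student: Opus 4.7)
The plan is to deduce Corollary \ref{cr_12} directly from Corollary \ref{cr_10}, so it suffices to verify that a regularly varying sequence of index $s\in(-1,\infty)$ satisfies the hypothesis of Corollary \ref{cr_10}, namely that the limit $\log\omega(k)/\log k \to s$ exists as $k\to\infty$.

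First I would invoke the structure theorem for regularly varying sequences in the Karamata sense: any such $\omega$ of index $s$ admits a representation
\begin{equation*}
   \omega(k)=k^{s}L(k),
\end{equation*}
where $L=\{L(k)\}_{k\in\mathbb{N}}$ is slowly varying, i.e.\ $L(\lfloor\lambda k\rfloor)/L(k)\to 1$ as $k\to\infty$ for every $\lambda>0$. Taking logarithms gives
\begin{equation*}
   \frac{\log\omega(k)}{\log k}=s+\frac{\log L(k)}{\log k},
\end{equation*}
so the task reduces to showing $\log L(k)/\log k\to 0$.

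Next I would quote the standard Karamata-type bound (a consequence of the integral representation $L(k)=c(k)\exp\int_{a}^{k}\varepsilon(t)/t\,dt$ with $c(k)\to c>0$ and $\varepsilon(t)\to 0$): for every $\varepsilon>0$ one has $k^{-\varepsilon}\ll L(k)\ll k^{\varepsilon}$, equivalently $|\log L(k)|\leq \varepsilon\log k+O(1)$ for $k$ large. Dividing by $\log k$ and sending $k\to\infty$, then $\varepsilon\to 0$, yields $\log L(k)/\log k\to 0$. Consequently
\begin{equation*}
   \lim_{k\to\infty}\frac{\log\omega(k)}{\log k}=s\in(-1,\infty),
\end{equation*}
and Corollary \ref{cr_10} gives $\omega\in\mathrm{MO}(H^{-1}(\mathbb{T}))$.

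There is no real obstacle here: the only content beyond Corollary \ref{cr_10} is the classical fact that slowly varying functions grow and decay more slowly than any power, which is a basic consequence of the Karamata representation theorem. The whole argument is just a one-line translation between the Karamata-regular-variation condition and the existence of the order $s$ used in Corollary \ref{cr_10}.
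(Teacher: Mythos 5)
Your argument is correct and coincides with the paper's intended derivation: the paper simply states that Corollary \ref{cr_12} follows from Corollary \ref{cr_10}, the implicit content being exactly the classical Karamata fact you supply, namely that $\omega(k)=k^{s}L(k)$ with $L$ slowly varying forces $\log\omega(k)/\log k\to s$. Nothing further is needed.
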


Note that the assumption of Corollary \ref{cr_12} holds, for instance, for the weight
\begin{align*}
  \omega(k) & =(1+2k)^{s}\,(\log (1+k))^{r_{1}}(\log\log (1+k))^{r_{2}}\ldots (\log\log\ldots\log (1+k))^{r_{p}}, \\
    s & \in(-1,\infty),\;\{r_{1},\ldots,r_{p}\}\subset \mathbb{R},\;p\in \mathbb{N},\hspace{210pt}
\end{align*}
see the monographs \cite{BnGlTg, Snt}.

Theorem \ref{th_10} extends the statement \eqref{eq_19.1} and Corollary \ref{cr_12} to the case of non-regularly varying weights.

The following Example A shows that statement \eqref{eq_19.2} does not cover Corollary \ref{cr_10} and all the more Theorem~\ref{th_10}.
\begin{example}\label{ex_10}
Let $s\in (-1,\infty)$. Set
\begin{equation*}
  w(k):=
  \begin{cases}
    k^{s}\log(1+k) & \text{if}\quad k\in 2\mathbb{N}, \\
    k^{s} & \text{if}\quad k\in (2\mathbb{N}-1).
  \end{cases}
\end{equation*}
\end{example}
\noindent Then the weight $\omega=\{\omega(k)\}_{k\in \mathbb{N}}$ satisfies the conditions of Corollary \ref{cr_10}. But one can prove that the
weight
\begin{equation*}
  \omega^{\ast}:=\{(1+2k)\,\omega(k)\}_{k\in \mathbb{N}}
\end{equation*}
is not equivalent to any monotonic weight.
%
\section{Preliminaries}\label{sct_Prl}
Here, for convenience, we define Hilbert spaces of weighted two-sided sequences and formulate the Convolution Lemma \ref{lm_10}.

For every positive sequence $\omega=\{\omega(k)\}_{k\in\mathbb{N}}$ there exists its unique extension on $\mathbb{Z}$ which is a two-sided
sequence satisfying the conditions:
\begin{align*}
  \verb"i")\hspace{5pt} & \omega(0)=1;\hspace{340pt} \\
  \verb"ii")\hspace{5pt} & \omega(-k)=\omega(k)\quad \forall k\in \mathbb{N}; \\
  \verb"iii")\hspace{5pt} & \omega(k)>0\quad \forall k\in \mathbb{Z}.
\end{align*}

Let
\begin{equation*}
  h^{\omega}(\mathbb{Z})\equiv h^{\omega}(\mathbb{Z},\mathbb{C})
\end{equation*}
be the Hilbert space of two-sided sequences:
\begin{align*}
  h^{\omega}(\mathbb{Z}) & :=\left\{a=\{a(k)\}_{k\in \mathbb{Z}}\left|
 \sum_{k\in \mathbb{Z}}\omega^{2}(k)|a(k)|^{2}<\infty\right.\right\}, \\
  (a,b)_{h^{\omega}(\mathbb{Z})} & :=\sum_{k\in \mathbb{Z}}\omega^{2}(k)a(k)\overline{b(k)},\quad
  a,b\in h^{\omega}(\mathbb{Z}), \\
  \|a\|_{h^{\omega}(\mathbb{Z})} & :=(a,a)_{h^{\omega}(\mathbb{Z})}^{1/2},\quad
  a\in h^{\omega}(\mathbb{Z}).
\end{align*}

Basic weights which we use are the power ones:
\begin{equation*}
 w_{s}=\left\{w_{s}(k)\right\}_{k\in \mathbb{Z}}:\qquad w_{s}(k)=(1+2|k|)^{s},\quad s\in
 \mathbb{R}.
\end{equation*}
In this case it is convenient to use shorter notations:
\begin{equation*}
   h^{\omega_{s}}(\mathbb{Z})\equiv h^{s}(\mathbb{Z}),\quad s\in \mathbb{R}.
\end{equation*}

Operation of convolution for two-sided sequences
\begin{equation*}
  a=\{a(k)\}_{k\in\mathbb{Z}}\quad\text{and}\quad b=\{b(k)\}_{k\in\mathbb{Z}}
\end{equation*}
is formally defined as follows:
\begin{align*}
  (a,b) & \mapsto a\ast b, \\
  (a\ast b)(k) & :=\sum_{j\in \mathbb{Z}}a(k-j)\,b(j),\quad k\in \mathbb{Z}.
\end{align*}

Sufficient conditions for the convolution to exist as a continuous map are given by the following known lemma, see for example \cite{KpMh, Mhr}.
\begin{lemma}[The Convolution Lemma]\label{lm_10}
Let $s,r\geq 0$, and $t\leq\min(s,r)$, $t\in \mathbb{R}$. If $s+r-t>1/2$ then the convolution $(a,b)\mapsto a\ast b$ is well defined as a
continuous map acting in the spaces:
\begin{align*}
  (a)\hspace{5pt} & h^{s}(\mathbb{Z})\times h^{r}(\mathbb{Z})\rightarrow h^{t}(\mathbb{Z}),\hspace{280pt} \\
  (b)\hspace{5pt} & h^{-t}(\mathbb{Z})\times h^{s}(\mathbb{Z})\rightarrow h^{-r}(\mathbb{Z}).
\end{align*}

In the case $s+r-t<1/2$ this statement fails to hold.
\end{lemma}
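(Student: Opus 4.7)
The statement is a standard weighted bilinear estimate on the Fourier side, equivalent to the Sobolev multiplication theorem $H^{s}(\mathbb{T})\cdot H^{r}(\mathbb{T})\hookrightarrow H^{t}(\mathbb{T})$ on the circle. My plan is to reduce (a) and (b) to a single symmetric trilinear estimate by duality, prove that trilinear estimate by a direct splitting argument, and finally verify sharpness by explicit test sequences.

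First, I would recast both (a) and (b) as the trilinear estimate
\[
\Bigl|\sum_{i+j+l=0} a(i)\,b(j)\,c(l)\Bigr| \leq C\,\|a\|_{h^{s}(\mathbb{Z})}\,\|b\|_{h^{r}(\mathbb{Z})}\,\|c\|_{h^{-t}(\mathbb{Z})}.
\]
Indeed, after reflecting $c$, the $\ell^{2}$ pairing $\langle a\ast b,c\rangle$ takes exactly this form; each space $h^{\sigma}(\mathbb{Z})$ is invariant under the reflection $k\mapsto -k$; and $h^{-\sigma}(\mathbb{Z})$ is the $\ell^{2}$-dual of $h^{\sigma}(\mathbb{Z})$. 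The trilinear estimate is therefore equivalent to each of (a) and (b), and is manifestly symmetric in its three arguments. The hypotheses $s,r\geq 0$, $t\leq\min(s,r)$, $s+r-t>1/2$ translate precisely to the pair conditions $\sigma_{i}+\sigma_{j}\geq 0$ for all pairs together with the triple-sum condition $\sigma_{1}+\sigma_{2}+\sigma_{3}>1/2$, where $\sigma_{1}=s$, $\sigma_{2}=r$, $\sigma_{3}=-t$.

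Next, I would establish the trilinear estimate by direct summation. Setting $\alpha(k):=(1+2|k|)^{s}|a(k)|$, $\beta(k):=(1+2|k|)^{r}|b(k)|$ and $\gamma(k):=(1+2|k|)^{-t}|c(k)|$, all in $\ell^{2}(\mathbb{Z})$, the task is to bound
\[
S:=\sum_{i+j+l=0}(1+2|i|)^{-s}(1+2|j|)^{-r}(1+2|l|)^{t}\,\alpha(i)\beta(j)\gamma(l)
\]
by $C\,\|\alpha\|_{\ell^{2}}\|\beta\|_{\ell^{2}}\|\gamma\|_{\ell^{2}}$. Splitting the summation into three regions according to which of $|i|,|j|,|l|$ is largest, and using $i+j+l=0$ to conclude that the second-largest index is at least half the largest, I would invoke the pair conditions $\sigma_{i}+\sigma_{j}\geq 0$ to absorb the non-dominant weight factors, and then apply Cauchy--Schwarz twice. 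The residual constant reduces to a series of the form $\sum_{n\geq 1}n^{-2(s+r-t)}$, which converges exactly when $s+r-t>1/2$.

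Finally, for the sharpness assertion I would test the estimate on explicit sequences. For $s,r\in[0,1/2)$, taking $a(k)=(1+|k|)^{-s-1/2}\log^{-1}(2+|k|)$ and $b(k)=(1+|k|)^{-r-1/2}\log^{-1}(2+|k|)$ (so $a\in h^{s}$ and $b\in h^{r}$), an elementary asymptotic analysis of the discrete convolution, dominated by the middle range $|j|\asymp|k|/2$, yields $(a\ast b)(k)\asymp|k|^{-s-r}\log^{-2}|k|$, so $\|a\ast b\|_{h^{t}}$ is infinite whenever $s+r-t<1/2$; the remaining ranges of $s,r$ are handled by analogous tail constructions. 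The main technical obstacle lies in the second step: extracting the sharp exponent $s+r-t>1/2$ without loss requires a careful choice of auxiliary weights on each of the three regions, and a verification that the pair conditions $\sigma_{i}+\sigma_{j}\geq 0$ suffice to absorb all the off-dominant weights in each case.
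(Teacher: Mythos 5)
The paper offers no proof of this lemma at all: it is quoted as a known result with a pointer to \cite{KpMh, Mhr}, so there is no internal argument to compare against. Your proposal is a correct, self-contained proof and follows the standard route that underlies those references: duality (plus reflection invariance of $h^{\sigma}(\mathbb{Z})$) collapses both (a) and (b) into a single symmetric trilinear estimate on the hyperplane $i+j+l=0$; that estimate is proved by splitting according to which frequency is largest, noting that the second largest is at least half the largest, absorbing the off-dominant weights via the pairwise conditions $\sigma_i+\sigma_j\ge 0$, and applying Cauchy--Schwarz twice, with the threshold $s+r-t>1/2$ emerging as the convergence condition for $\sum_{n\ge 1} n^{-2(s+r-t)}$; and sharpness follows from explicit test sequences. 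Two small refinements. First, the hypotheses $s,r\ge 0$, $t\le\min(s,r)$ \emph{imply} the three pairwise conditions but are not equivalent to them (e.g.\ $(\sigma_1,\sigma_2,\sigma_3)=(-1,2,2)$ satisfies all pairwise conditions), so your symmetric formulation actually proves a slightly stronger statement than the lemma; only the forward implication is needed, and it does hold. Second, in the sharpness step the restriction to $s,r\in[0,1/2)$ and the appeal to ``analogous tail constructions'' are unnecessary: since all terms of the convolution are nonnegative, the middle range $|j|\asymp |k|/2$ alone gives the lower bound $(a\ast b)(k)\gtrsim |k|^{-s-r}\log^{-2}|k|$ for \emph{all} $s,r\ge 0$ (even when the true size of $(a\ast b)(k)$ is larger because the tails dominate), and this lower bound already forces $\|a\ast b\|_{h^{t}}=\infty$ whenever $s+r-t<1/2$; failure of (b) then follows from failure of the common trilinear estimate by the duality set up in your first step.
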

\section{The Proofs}\label{sct_Prf}
Basic point of our proof of Theorem \ref{th_10} is sharp asymptotic formulae for the lengths of spectral gaps $\{\gamma_{q}(n)\}_{n\in\mathbb{N}}$
of the Hill-Schr\"{o}dinger operators $S(q)$.
\begin{lemma}\label{lm_12}
The lengths of spectral gaps $\{\gamma_{q}(n)\}_{n\in\mathbb{N}}$ of the Hill-Schr\"{o}dinger operators $S(q)$ with $q\in
H^{s}(\mathbb{T},\mathbb{R})$, $s\in (-1,\infty)$, uniformly on the bounded sets of potentials $q$ in the corresponding Sobolev spaces
$H^{s}(\mathbb{T})$ for $n\geq n_{0}$, $n_{0}=n_{0}\left(\|q\|_{H^{s}(\mathbb{T})}\right)$, satisfy the following asymptotic formulae:
\begin{align}
  \gamma_{q}(n) & =2|\widehat{q}(n)|+h^{1+2s-\varepsilon}(n),\;\varepsilon>0 & & \text{if}\quad s\in(-1,0],\label{eq_28} \\
  \gamma_{q}(n) & =2|\widehat{q}(n)|+h^{1+s}(n) & & \text{if}\quad s\in[0,\infty).\label{eq_30}
\end{align}
\end{lemma}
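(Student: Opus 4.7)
My strategy is to derive a concrete second-order asymptotic expansion
\begin{equation*}
  \gamma_{q}(n) = 2\bigl|\widehat{q}(n) + R_{q}(n)\bigr|,
\end{equation*}
with an explicit formula for the remainder $R_{q}(n)$ as an iterated bilinear expression in the Fourier coefficients of $q$, and then to apply the Convolution Lemma \ref{lm_10} to place $\{R_{q}(n)\}_{n}$ in the appropriate weighted sequence space. The leading term $2|\widehat{q}(n)|$ captures the direct coupling of the two free eigenfunctions $e^{\pm i\pi n x}$ of $-u''$ on $[0,1]$ by $q$, while $R_{q}(n)$ absorbs all higher-order effects.

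\emph{Localisation and two-dimensional reduction.} Fix $M>0$ and restrict to $\|q\|_{H^{s}(\mathbb{T})}\le M$. Using the quasi-differential realisation of $S_{\pm}(q)$ from \cite{MiMl6}, I would show that for $n\ge n_{0}(M)$ the periodic/semiperiodic eigenvalues $\lambda_{n}^{\pm}(q)$ lie in a shrinking disc around $(\pi n)^{2}$ and are separated from the rest of the spectrum by a gap of order $n$. A Riesz-projection argument then reduces the spectral problem near $(\pi n)^{2}$ to an effective $2\times 2$ Schur complement on $\mathrm{span}\{e^{\pm i\pi n x}\}$. Since $q$ is real and only the off-diagonal block of this block produces the splitting, $\gamma_{q}(n)$ equals twice the modulus of the effective off-diagonal matrix element, whose expansion in $q$ has first term $\widehat{q}(n)$ and second-order contribution schematically
\begin{equation*}
  R_{q}(n) \sim \sum_{k\ne 0,\,2n} \frac{\widehat{q}(k)\,\widehat{q}(2n-k)}{k(2n-k)},
\end{equation*}
with the higher-order tails controlled analogously.

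\emph{Applying the Convolution Lemma.} Define $\widetilde{q}(k):=\widehat{q}(k)/k$ for $k\ne 0$. Clearly $\widetilde{q}\in h^{s+1}(\mathbb{Z})$ whenever $q\in H^{s}(\mathbb{T})$, and the expression above becomes, up to admissible error, $(\widetilde{q}\ast\widetilde{q})(2n)$. Apply Lemma \ref{lm_10}(a) with $\sigma=\tau=s+1\ge 0$ and output index $\mu\le s+1$ subject to $2(s+1)-\mu>1/2$. For $s\in[0,\infty)$ the choice $\mu=1+s$ satisfies both constraints, yielding $R_{q}\in h^{1+s}(\mathbb{N})$, which is \eqref{eq_30}. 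For $s\in(-1,0]$ one takes $\mu=1+2s-\varepsilon$ for any $\varepsilon>0$, giving $R_{q}\in h^{1+2s-\varepsilon}(\mathbb{N})$, which is \eqref{eq_28}. Uniformity on bounded subsets of $H^{s}(\mathbb{T})$ is automatic because the Convolution Lemma is quantitative.

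\emph{Main obstacle.} The principal difficulty lies in the singular regime $s$ close to $-1$: since $q$ is only a distribution the Schur-complement/resolvent expansion must be carried out rigorously within the quasi-differential framework of \cite{MiMl6}, and the infinite tail of iterated convolutions from higher orders of perturbation theory has to be dominated by the explicit second-order term $R_{q}$. The $\varepsilon$-loss in \eqref{eq_28} and its disappearance in \eqref{eq_30} is the exact signature of the sharpness threshold $\sigma+\tau-\mu=1/2$ of Lemma \ref{lm_10}: for $s\ge 0$ one has enough margin to sit strictly above the threshold with $\mu=1+s$, whereas for $s\le 0$ the threshold is approached and the $\varepsilon$ becomes unavoidable.
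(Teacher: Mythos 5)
Your plan for the range $s\in[0,\infty)$ essentially reproduces the paper's argument: the authors also write $\gamma_{q}(n)=2|\widehat{q}(n)+\varrho(n)|+h^{1+s}(n)$ with $\varrho(n)=\bigl(\{\widehat{q}(k)/k\}\ast\{\widehat{q}(k)/k\}\bigr)(2n)$ and put $\varrho$ into $h^{1+s}(\mathbb{N})$ by the Convolution Lemma. The difference is that they do not re-derive the deviation estimate by a Riesz-projection/Schur-complement analysis; they simply quote it from Kappeler--Mityagin \cite[Theorem~1.2]{KpMt2}, and they quote the whole of the case $s\in(-1,0]$ from \cite[Theorem~1]{MiMl7}, where it is proved by an isospectral transformation of the problem rather than by perturbation of the free operator. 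So the genuinely new analytic content of the lemma is, in the paper, entirely outsourced to those two references, whereas you propose to rebuild it; that is legitimate in principle but it is exactly the part your write-up leaves as ``I would show'' and ``has to be dominated''.

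There is, moreover, a concrete error in your explanation of the singular case, and it signals a real gap. With $\sigma=\tau=s+1$ the Convolution Lemma permits any output index $t$ with $t\le s+1$ and $t<2(s+1)-1/2=2s+3/2$; for $s\in(-1,0]$ the index $t=1+2s$ itself satisfies both constraints (indeed $2(s+1)-(1+2s)=1>1/2$, comfortably above the threshold), and one can even take $t=\min\bigl(1+s,\,2s+3/2-\varepsilon\bigr)$, which is strictly larger than $1+2s$. Hence the exponent $1+2s-\varepsilon$ in \eqref{eq_28} is \emph{not} ``the exact signature of the sharpness threshold'' of Lemma \ref{lm_10} applied to the second-order term $R_{q}$ --- that term alone lands in a better space. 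The loss down to $1+2s-\varepsilon$ comes from controlling the full remainder of the expansion (equivalently, from the method of \cite{MiMl7}) for distributional $q$, which is precisely the step your proposal does not carry out. As written, your argument would either prove a stronger remainder estimate than is true by your method (because you have only bounded the second-order term), or it proves nothing in the range $s\in(-1,0]$ until the higher-order tail is actually dominated; the paper avoids this entirely by citation.
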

\begin{proof}[Proof of Lemma \ref{lm_12}]
The asymptotic estimates \eqref{eq_28} were established by the authors in the paper \cite[Theorem~1]{MiMl7} (see also \cite{MiMl3, Mlb2}) by
method of the isospectral transformation of the problem \cite{Mlb1, MiMl1, MiMl2, MiMl3, Mlb2}.

The asymptotic formulae \eqref{eq_30} follow from \cite[Theorem 1.2]{KpMt2} due to the Convolution Lemma~\ref{lm_10} (see also
\cite[Appendix]{KpMt2}). Indeed, from \cite[Theorem 1.2]{KpMt2} with $q\in H^{s}(\mathbb{T},\mathbb{R})$, $s\in [0,\infty)$, we get
\begin{equation}\label{eq_32}
  \sum_{n\in \mathbb{N}}(1+2n)^{2(1+s)}\left(\min_{\pm}\left|\gamma_{q}(n)\pm
  2\sqrt{(\widehat{q}+\varrho)(-n)(\widehat{q}+\varrho)(n)}\right|\right)^{2}\leq
  C\left(\|q\|_{H^{s}(\mathbb{T})}\right),
\end{equation}
where
\begin{equation*}
  \varrho(n):=\frac{1}{\pi^{2}}\sum_{j\in
  \mathbb{Z}\setminus\{\pm n\}}\frac{\widehat{q}(n-j)\widehat{q}(n+j)}{(n-j)(n+j)}.
\end{equation*}
Without losing generality we assume that
\begin{equation}\label{eq_33}
 \widehat{q}(0):=0.
\end{equation}

Taking into account that the potentials $q$ are real-valued we have
\begin{equation*}
  \widehat{q}(k)=\overline{\widehat{q}(-k)},\; \varrho(k)=\overline{\varrho(-k)},\quad  k\in \mathbb{Z}.
\end{equation*}
Then from \eqref{eq_32} we get the estimates
\begin{equation}\label{eq_34}
  \left\{\gamma_{n}(q)-2\left|\widehat{q}(n)+\varrho(n)\right|\right\}_{n\in \mathbb{N}}\in h^{1+s}(\mathbb{N}).
\end{equation}

Further, as  by assumption $q\in H^{s}(\mathbb{T},\mathbb{R})$, that is $\{\widehat{q}(k)\}_{k\in \mathbb{Z}}\in h^{s}(\mathbb{Z})$, then
\begin{equation*}
  \left\{\frac{\widehat{q}(k)}{k}\right\}_{k\in\mathbb{Z}}\in h^{1+s}(\mathbb{Z}),\quad s\in
  [0,\infty),
\end{equation*}
taking into account \eqref{eq_33}. Applying the Convolution Lemma \ref{lm_10} we obtain
\begin{align}\label{eq_36}
  \varrho(n) & =\frac{1}{\pi^{2}}\sum_{j\in
  \mathbb{Z}}\frac{\widehat{q}(n-j)\widehat{q}(n+j)}{(n-j)(n+j)}=\frac{1}{\pi^{2}}\sum_{j\in
  \mathbb{Z}}\frac{\widehat{q}(2n-j)}{2n-j}\cdot\frac{\widehat{q}(j)}{j}  \\
  & =\left(\left\{\frac{\widehat{q}(k)}{k}\right\}_{k\in \mathbb{Z}}\ast\left\{\frac{\widehat{q}(k)}{k}\right\}_{k\in
  \mathbb{Z}}\right)(2n)\in h^{1+s}(\mathbb{N}). \notag
\end{align}

Finally, from \eqref{eq_34} and \eqref{eq_36} we get the necessary estimates \eqref{eq_30}.

The proof of Lemma \ref{lm_12} is complete.
\end{proof}

\textbf{Proof of Theorem \ref{th_10}}. Let  $q\in H^{-1}(\mathbb{T},\mathbb{R})$ and $\omega=\{\omega(k)\}_{k\in\mathbb{N}}$ be a given weight
satisfying the conditions \eqref{eq_20} and \eqref{eq_22} of Theorem \ref{th_10}. We need to prove the statement
\begin{equation}\label{eq_38}
  q\in H^{\omega}(\mathbb{T},\mathbb{R})\Leftrightarrow \{\gamma_{q}(\cdot)\}\in  h^{\omega}(\mathbb{N}).
\end{equation}

From formula \eqref{eq_20} and definition of the lower and upper orders of a weight sequence we conclude that for the given weight
$\omega=\{\omega(k)\}_{k\in\mathbb{N}}$  the following estimates are fulfilled:
\begin{equation*}
   k^{\mu-\delta}\ll\omega(k)\ll k^{\rho+\delta},\qquad -1<\mu\leq\rho<\infty,\;\delta>0.
\end{equation*}
Hence, the continuous embeddings
\begin{align}
 H^{\rho+\delta}(\mathbb{T}) & \hookrightarrow H^{\omega}(\mathbb{T})\hookrightarrow H^{\mu-\delta}(\mathbb{T}),\label{eq_40} \\
 h^{\rho+\delta}(\mathbb{N}) & \hookrightarrow h^{\omega}(\mathbb{N})\hookrightarrow h^{\mu-\delta}(\mathbb{N}), \hspace{20pt}
 -1<\mu\leq\rho<\infty,\;\delta>0 \label{eq_42}
\end{align}
are valid becouse of
\begin{equation}\label{eq_44}
  H^{\omega_{1}}(\mathbb{T})\hookrightarrow H^{\omega_{2}}(\mathbb{T}),\quad
  h^{\omega_{1}}(\mathbb{N})\hookrightarrow h^{\omega_{2}}(\mathbb{N})\quad\text{if only}\quad \omega_{1}\gg \omega_{2}.
\end{equation}

Let $q\in H^{\omega}(\mathbb{T},\mathbb{R})$, then from \eqref{eq_40} we obtain that $q\in H^{\mu-\delta}(\mathbb{T},\mathbb{R})$, $\delta>0$. Due
to arbitrary choice of $\delta>0$ we may choose it so that:
\begin{align*}
  & \mu-\delta>0 \quad\text{if}\quad \mu>0, \\
  & \mu-\delta>-1 \quad\text{otherwise}.
\end{align*}
Further, using Lemma \ref{lm_12} we get the following asymptotic formulae for the lengths of spectral gaps:
\begin{align}
  \gamma_{q}(n) & =2|\widehat{q}(n)|+h^{1+2(\mu-\delta)-\varepsilon}(n),\;\varepsilon>0 & & \text{if}\quad
  \mu-\delta\in(-1,0],\label{eq_46} \\
  \gamma_{q}(n) & =2|\widehat{q}(n)|+h^{1+\mu-\delta}(n) & & \text{if}\quad \mu-\delta\in[0,\infty).\label{eq_48}
\end{align}
Now, due to possibility of arbitrary choice of $\delta>0$ and $\varepsilon>0$ we may choose them so that:
\begin{align}
  1+2(\mu-\delta)-\varepsilon>\rho+\delta & \hspace{10pt}\text{if}\hspace{10pt}\mu-\delta\in (-1,0],\label{eq_50} \\
  1+\mu-\delta>\rho+\delta & \hspace{10pt}\text{if}\hspace{10pt}\mu-\delta\in [0,\infty).\label{eq_52}
\end{align}
This choice is possible as \eqref{eq_20} and \eqref{eq_22} hold.

Taking into account \eqref{eq_50}, \eqref{eq_52} and using \eqref{eq_44}, from \eqref{eq_46}, \eqref{eq_48} we get estimates
\begin{equation*}
  \gamma_{q}(n)=2|\widehat{q}(n)|+h^{\rho+\delta}(n).
\end{equation*}
From the latter formula and formula \eqref{eq_42} we finally derive
\begin{equation}\label{eq_54}
   \gamma_{q}(n)=2|\widehat{q}(n)|+h^{\omega}(n).
\end{equation}
Hence, as by assumption $q\in H^{\omega}(\mathbb{T},\mathbb{R})$, and as consequence $\{\widehat{q}(\cdot)\}\in h^{\omega}(\mathbb{N})$, we get
$\{\gamma_{q}(\cdot)\}\in h^{\omega}(\mathbb{N})$.

The implication $(\Rightarrow)$ in \eqref{eq_38} has been proved.

Conversely, let $\{\gamma_{q}(\cdot)\}\in h^{\omega}(\mathbb{N})$. Then applying \eqref{eq_42} we have $\{\gamma_{q}(\cdot)\}\in
h^{\mu-\delta}(\mathbb{N})$, $\delta>0$:
\begin{align*}
  & \mu-\delta>0 \quad\text{if}\quad \mu>0, \\
  & \mu-\delta>-1 \quad\text{otherwise}.
\end{align*}
Now, applying \eqref{eq_19.1} we conclude that $q\in H^{\mu-\delta}(\mathbb{T},\mathbb{R})$.

We have already proved the implication
\begin{equation*}
  q\in H^{\mu-\delta}(\mathbb{T},\mathbb{R})\Rightarrow \eqref{eq_54}.
\end{equation*}
So we have
\begin{equation*}
  \gamma_{q}(n)=2|\widehat{q}(n)|+h^{\omega}(n),
\end{equation*}
and hence $\{\widehat{q}(\cdot)\}\in h^{\omega}(\mathbb{N})$, i.e., $q\in H^{\omega}(\mathbb{T},\mathbb{R})$.

The implication $(\Leftarrow)$ in \eqref{eq_38} has been proved.

The proof of Theorem \ref{th_10} is complete.
\section*{Acknowledgments}
The authors would like to express their gratitude to Professors V.A.~Mar\-chen\-ko and F.S.~Rofe-Beketov, for their attention to this paper.


\end{document}